\documentclass[12pt, a4paper, reqno]{amsart}
\usepackage[a4paper,top=2cm,left=1.8cm,right=1.8cm]{geometry}

\usepackage{lmodern}
\usepackage{microtype}

\usepackage{amsmath}        
\usepackage{amsfonts}       
\usepackage{amsthm}         
\usepackage{amssymb}%
\usepackage{bm}             
\usepackage{graphicx}       
\usepackage[dvipsnames]{xcolor}  
\usepackage[colorlinks=true]{hyperref}
\hypersetup{citecolor=Sepia, linkcolor=Sepia, urlcolor=MidnightBlue}
\usepackage[T1]{fontenc} 
\usepackage{mathtools}
\usepackage[shortlabels]{enumitem}
\setlist[enumerate,1]{label=(\alph*), font=\normalfont}
\setlist[enumerate,2]{label=(\roman*), font=\normalfont,, ref=\theenumi(\roman*)}

\usepackage{csquotes}
\usepackage{bbding}

\theoremstyle{plain}
\newtheorem{theorem}{Theorem}
\newtheorem{lemma}[theorem]{Lemma}
\newtheorem{proposition}[theorem]{Proposition}
\newtheorem{corollary}[theorem]{Corollary}
\newtheorem{conjecture}[theorem]{Conjecture}
\newtheorem*{KitaokaConj}{Kitaoka's conjecture}

\theoremstyle{definition}

\DeclareMathOperator{\disc}{disc}

\DeclareMathOperator{\Tr}{Tr}
\DeclareMathOperator{\NN}{N} 

\newcommand{\R}{\mathbb{R}}
\newcommand{\Z}{\mathbb{Z}}
\newcommand{\Q}{\mathbb{Q}}
\newcommand{\Complex}{\mathbb{C}}

\newcommand{\ii}{\mathrm{i}}
\newcommand{\ee}{\mathrm{e}}

\newcommand{\vct}[1]{\mathbf{#1}} 

\newcommand{\vu}{\vct{u}}
\newcommand{\vv}{\vct{v}}

\newcommand{\ve}{\varepsilon}
\newcommand{\abs}[1]{\mathopen|#1\mathclose|}

\newcommand{\co}{\mathcal{O}}
\newcommand{\U}{\mathcal{U}}
\newcommand{\UK}[1][K]{\U_{#1}}
\newcommand{\UKPlus}[1][K]{\UK[{#1}]^+}
\newcommand{\UKctv}[1][K]{\UK[{#1}]^2}
\newcommand{\OK}[1][K]{\co_{{\:\!\! #1}}}
\newcommand{\OKPlus}[1][K]{\OK[#1]^+}

\newcommand{\UTL}{universal ternary lattice}
\newcommand{\UTF}{universal ternary form}

\def\house#1{{%
		\setbox0=\hbox{$#1$}
		\vrule height \dimexpr\ht0+1.4pt width .4pt depth \dp0\relax
		\vrule height \dimexpr\ht0+1.4pt width \dimexpr\wd0+2pt depth \dimexpr-\ht0-1pt\relax
		\llap{$#1$\kern1pt}
		\vrule height \dimexpr\ht0+1.4pt width .4pt depth \dp0\relax
}}

\newcommand{\qq}[1]{\Q(\!\sqrt{#1})} 
\newcommand{\arXiv}[1]{\href{https://arxiv.org/abs/#1}{arXiv:#1}}

\title[Kitaoka's conjecture for real cyclotomic fields]{Kitaoka's conjecture for real cyclotomic fields}

\author{Jakub Kr\'asensk\'y}
\address{Czech Technical University in Prague, Faculty of Information Technology, Department of Applied Mathematics, Thákurova~9, 16000 Praha~6, Czech Republic}
\email{jakub.krasensky@fit.cvut.cz}

\keywords{universal quadratic form, quadratic lattice, ternary quadratic form, cyclotomic field, real cyclotomic field, totally real, Kitaoka's Conjecture}
\subjclass[2020]{11E12, 11E20, 11R04, 11R18, 11R80}

\begin{document}
	\begin{abstract}
		We prove Kitaoka's conjecture for all real cyclotomic fields, i.e.\ for fields of the form $\Q(\zeta_k^{}+\zeta_k^{-1})$ where $\zeta_k$ is a primitive $k$th root of unity. More precisely, we show that such a field admits a universal classical ternary quadratic form if and only if it is a real quadratic field.
	\end{abstract}
	
	\maketitle
	
	\section{Introduction}
	
	Many authors studied which integral quadratic forms are \emph{universal}, i.e.\ represent all natural numbers. The most famous example is the sum of four squares; its universality is precisely the Four-square theorem of Lagrange \cite{Lagrange}. Since it is well known that no positive definite quadratic form in three variables over $\Z$ can be universal, the universal forms in four variables received particular attention. For example, Liouville \cite{Liouville} proved that there are exactly seven universal forms of the type $x^2+ay^2+bz^2+abw^2$; from Pépin's work \cite{Pépin} follows the universality of several other quaternary quadratic forms. Ramanujan \cite{Ramanujan} then completely classified which forms $ax^2+by^2+cz^2+dw^2$ are universal, making the famous mistake of including the form with coefficients $(1,2,5,5)$, which in fact represents all natural numbers except $15$. This was pointed out by Dickson \cite{Dickson}, who also systematically studied universality of non-diagonal forms (i.e.\ with cross-terms) and introduced the word \emph{universal} itself. Willerding's work \cite{Willerding} contains a classification of quaternary universal quadratic forms that are \emph{classical}, i.e.\ all their cross-terms are even. Much later, universal classical quadratic forms of any rank have been elegantly classified by the 15-Theorem of Conway--Schneeberger \cite{Conway-15} (a simpler proof has been provided by Bhargava \cite{Bhargava-15}); this also allowed to check Willerding's list and correct mistakes therein. The generalisation to non-classical quadratic forms is the 290-Theorem by Bhargava--Hanke, which has a very complicated proof that has not yet been fully published.
	
	Thus, universal quadratic forms over $\Q$ are now fully understood. However, there is a parallel story over number fields. We first introduce the necessary terminology. Let $K$ be a totally real number field, i.e.\ such that all embeddings $\sigma: K \hookrightarrow \Complex$ satisfy $\sigma(K) \subset \R$. We use $\OK$ for its ring of integers and $\OKPlus = \{\alpha \in \OK \mid \sigma(\alpha) > 0 \text{ for all embeddings $\sigma$}\}$ for the set of all \emph{totally positive} integers. A \emph{quadratic form} over $K$ is a homogeneous polynomial $Q$ of degree $2$, i.e.\ $Q(x_1,\ldots, x_n) = \sum_{1\leq i\leq j \leq n} a_{ij}x_ix_j$ with $a_{ij} \in \OK$. Such a form is \emph{classical} if $a_{ij} \in 2\OK$ for all $i\neq j$, and \emph{totally positive definite} if $Q(\vct{x})\in\OKPlus$ for all $\vct{x} \neq \vct{0}$. In this paper, all quadratic forms are assumed to be totally positive definite. A form in three (four) variables is \emph{ternary} (\emph{quaternary}). A form is \emph{universal} if $Q(\OK^{\;\!n}) = \OKPlus \cup\{0\}$, or in other words if it represents every totally positive integer.
	
	We pointed out that a universal quadratic form over $\Q$ needs at least four variables; this holds more generally for totally real number fields of odd degree \cite{EK}. Over fields of even degree there is no such restriction, and in fact, Maass \cite{Maass} proved that the sum of three squares over $\qq{5}$ is universal. On the other hand, Siegel \cite{Siegel} showed shortly afterwards that $\Q$ and $\qq{5}$ are the only totally real fields where the sum of any number of squares is universal. Every totally real number field admits a universal quadratic form; one can construct it using the asymptotic local--global principle of Hsia--Kitaoka--Kneser \cite{HKK}. However, such a construction yields no bound on the number of variables, and, indeed, there are many papers exhibiting families of number fields over which universal forms require arbitrarily many variables \cite{BK1, BK2, Ka1, Ka3, KS, KT, Man, Ya}. Of particular interest is the paper \cite{KYZ} that shows that for any chosen $N$, quadratic fields admitting a universal $N$-ary quadratic form have density zero among all quadratic fields. On the other hand, Kim \cite{Kim-octonary} showed there are infinitely many quadratic fields admitting a universal form in $8$ variables.
	
	Just as quaternary universal forms over $\Q$, ternary universal forms over number fields received significant attention. Kitaoka conjectured that they are very rare:
	
	\begin{KitaokaConj}
		There are only finitely many totally real number fields that admit a universal ternary classical quadratic form.
	\end{KitaokaConj}
	
	The only three fields known to admit a universal ternary classical quadratic form are $\qq{2}$, $\qq{3}$ and $\qq{5}$, found in \cite{CKR}. These three quadratic fields can also be expressed as $F_k:= \Q(\zeta_k)^+:=\Q(\zeta_k^{}+\zeta_k^{-1})$ for $k=8, 12$ and $5$, respectively. In this short note, we are going to prove that no other field of this form (a \enquote{real cyclotomic field}) admits a classical universal ternary quadratic form:
	
	
	\begin{theorem} \label{th:main}
		The field $\Q(\zeta_k)^+$ admits a classical universal ternary quadratic form if and only if it is one of $\qq{2}$, $\qq{3}$ and $\qq{5}$.
	\end{theorem}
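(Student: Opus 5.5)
The "if" direction is the result of \cite{CKR}, which found classical universal ternary forms over $\qq{2}$, $\qq{3}$ and $\qq{5}$, that is over $F_8$, $F_{12}$ and $F_5$. For the converse, if $[F_k:\Q]$ is odd (including $F_k=\Q$), then \cite{EK} already excludes a universal ternary form. So it remains to treat $F_k$ of even degree $d=\varphi(k)/2\ge 2$. Among the quadratic ones, $F_k$ is quadratic exactly for $k\in\{5,8,10,12\}$, and $F_{10}=F_5$, so no new quadratic field arises. The whole difficulty is therefore to exclude every $F_k$ with $d\ge 4$ even.

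For this I would use the standard escalation-type obstruction. Suppose $L$ is a classical universal ternary lattice over $\OK[F_k]$. Then $L$ represents $1$, so $L\supseteq\langle 1\rangle$. Since $L$ is classical, $1=Q(v)$ with $v=\sum x_i e_i$ and $B(e_i,e_j)\in\OK$ give us this. Next, $L$ must represent the totally positive units not of the form square times an element represented by $\langle1\rangle$, and then further small elements. The key input would be a set $S$ of \emph{indecomposable} totally positive elements of small trace, with pairwise distinct classes modulo squares, such that no classical ternary lattice can represent all of them. In a classical lattice, an indecomposable element $\alpha$ can only be represented as $Q(v)$ where $v$ is essentially a single "minimal" vector. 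I would then use the count of elements of minimal norm: a ternary lattice has only finitely many vectors $v$ with $\Tr Q(v)$ small, and their Gram relations are constrained. Concretely, I would take units $\ve_1,\ldots,\ve_m$ that are totally positive and pairwise non-associated modulo squares; in cyclotomic fields, real cyclotomic units such as $(\zeta^a-\zeta^{-a})^2/(\zeta-\zeta^{-1})^2$ and $2+\zeta^j+\zeta^{-j}$ supply many of these. I would show that the vectors representing them span a sublattice whose Gram matrix has diagonal entries $\ve_i$ and off-diagonal entries $\beta_{ij}$ with $\ve_i\ve_j-\beta_{ij}^2\succ0$. Taking $4$ such vectors in a rank-$3$ lattice forces a $4\times4$ Gram determinant to vanish, and I would derive a contradiction from this using the trace bounds $\Tr(\beta^2)<\Tr(\ve_i\ve_j)$.

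The main obstacle is producing, uniformly in $k$, a supply of totally positive units (or indecomposables) in $F_k$ that is rich enough to give this rank contradiction. I would try first the elements $\alpha_j=2+\zeta^j+\zeta^{-j}=(\zeta^{j/2}+\zeta^{-j/2})^2$ and their unit ratios, together with Siegel's observation that for $F_k\ne\Q,\qq5$ some totally positive element such as $2+\zeta+\zeta^{-1}$ is not a sum of squares. This observation lets me control which small elements a lattice containing $\langle1\rangle$ can represent. I would split into the cases $k$ a prime power and $k$ composite, because the unit groups behave differently in the two cases. I expect a finite computational check to be needed for small $k$, say $\varphi(k)\le 16$, with a general trace argument covering large $k$.
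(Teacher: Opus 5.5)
Your preliminary reductions are fine: the \enquote{if} direction comes from \cite{CKR}, odd degree is excluded by \cite{EK}, and $\varphi(k)=4$ exactly for $k\in\{5,8,10,12\}$. The gap is in the treatment of even degree $d\ge4$, which is the whole content of the theorem. Your argument needs totally positive units in many different square classes, and for infinitely many $k$ no such units exist. In every $F_{2^n}$ each totally positive unit is a square (its narrow class number is odd). So your list $\ve_1,\dots,\ve_m$ collapses to the class of $1$, and no argument built on representing units can give a contradiction there.

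The units you name are in fact squares in $F_k$:
\begin{itemize}
\item $(\zeta^a-\zeta^{-a})^2/(\zeta-\zeta^{-1})^2$ is the square of $(\zeta^a-\zeta^{-a})/(\zeta-\zeta^{-1})$, which is fixed by complex conjugation and so lies in $F_k$;
\item $2+\zeta^j+\zeta^{-j}=(\zeta_{2k}^j+\zeta_{2k}^{-j})^2$ is a square in $F_k$ for odd $k$ (Lemma~\ref{le:whensquare}), and has norm $2$ for $k=2^n$, $j$ odd.
\end{itemize}
For the same reason your Siegel step fails, since $2+\zeta+\zeta^{-1}$ is a square for odd $k$. In any case, knowing that an element is not a sum of squares says nothing about what the complement $L'$ in $L=\langle1\rangle\perp L'$ represents.

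The rank argument does not work either. Four vectors in a rank-$3$ lattice always have vanishing Gram determinant, so that alone is no contradiction. Your argument needs four square classes, so at best it would reprove Proposition~\ref{pr:KKK2.5}, which already excludes more than two classes. The delicate unit case is exactly two classes. That case needs the finer criteria of \cite[Thm.~3.1]{KKK}: both $\ve$ and $2\ve$ nonsquare, or $\sqrt2\in K$, or some $\beta\in\OKPlus$ with $\NN_{K/\Q}(\beta)<2^{[K:\Q]}$ not a power of~$2$.

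The paper instead splits by $k$:
\begin{itemize}
\item For $k=2^n$ it uses \cite[Thm.~1.4]{KY-EvenBetter}.
\item For $4\mid k$, $k\neq2^n$, the element $\alpha_k$ is a nonsquare totally positive unit, and Theorem~\ref{th:general} (via Corollary~\ref{co:eps}) applies. It combines \cite[Thm.~3.1]{KKK} with the elements $\beta_{p^n}$ of norm $p<2^{[F_{p^n}:\Q]}$, plus a separate treatment of $F_{15}$, $F_{20}$, $F_{30}$, $F_{60}$.
\item For $4\nmid k$ one may take $k$ odd, so $2$ is unramified in $F_k$ and \cite[Thm.~1.1]{KKK} applies.
\end{itemize}
The idea you are missing is the one behind this last result, which replaces unit counting when every totally positive unit is a square. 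If $K\not\ni\sqrt2$ admits a classical universal ternary form, then every element of $2\OKPlus$ is a sum of four squares. Without that, and without the separate result for $2$-power $k$, your planned computation for $\varphi(k)\le16$ and the \enquote{general trace argument} for large $k$ have no concrete mechanism behind them.
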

	
	Since the only known fields exceptional with respect to Kitaoka's conjecture are real cyclotomic fields, we consider this theorem quite a significant piece of evidence for the validity of Kitaoka's conjecture. Note that Theorem \ref{th:main} for the case when $k$ is a prime or a power of $2$ was proved in \cite[Thm.~1.4]{KY-EvenBetter}.
	
	Let us give an overview of results that directly concern Kitaoka's conjecture. We already mentioned that by Earnest--Khosravani \cite{EK}, a universal ternary form can only exist in fields of even degree. Chan--Kim--Raghavan \cite{CKR} fully characterised quadratic fields admitting a classical universal ternary form by proving that the only such fields are $\qq{2}$, $\qq{3}$ and $\qq{5}$. Kim--Kim--Park \cite{KKP} recently showed that there are only finitely many quadratic fields admitting a universal form in seven variables, which of course raises the question whether Kitaoka's conjecture is not in fact valid with \enquote{ternary} replaced by \enquote{septenary}. Nevertheless, even the original Kitaoka's conjecture continues to defy proof. The case of degree four has been studied by K.--Tinková--Zemková in \cite{KTZ} for biquadratic fields and then fully solved in the recent preprint Kramer--K. \cite{KK}; it turns out that there are no quartic fields admitting a classical universal ternary form. Based on this and other evidence, the authors ventured to conjecture a strengthened version of Kitaoka's conjecture:
	
	\begin{conjecture}[{\cite[Conj.~1.2]{KK}}] \label{co:KitaokaStrong}
		Let $K$ be a totally real number field such that there exists a classical universal ternary quadratic form
		. Then $K=\Q(\!\sqrt2)$, $\Q(\!\sqrt3)$, or $\Q(\!\sqrt5)$. 
	\end{conjecture}
	
	Apart from evidence from low-degree fields, the only general results are the \enquote{Weak Kitaoka Theorem} by Kala--Yatsyna \cite{KY-weak} that Kitaoka's conjecture holds for fields of every fixed degree, and the recent paper \cite{KKK} by Kala--Kramer--K. which connects Kitaoka's conjecture with sums of squares by showing that if $K \not\ni \sqrt2$ admits a classical universal ternary form, then every element of $2\OKPlus$ is represented by $x^2+y^2+z^2+w^2$. This is a very useful result that allowed to confirm the Explicit Kitaoka's Conjecture \ref{co:KitaokaStrong} for fields with odd discriminant \cite[Thm.~1.1]{KKK} and played a crucial role in proving the main result of \cite{KK} (the validity of Conjecture \ref{co:KitaokaStrong} for all quartic fields).
	
	The main result of the present paper (Theorem \ref{th:main}) and its earlier special case \cite[Thm.~1.4]{KY-EvenBetter} are unusual as they explicitly resolve Kitaoka's conjecture for a (very natural) family of number fields of unbounded degree. Note that even Man's in-depth study of multiquadratic fields \cite{Man}, which essentially shows that the number of variables of a universal form over a multiquadratic field grows with the discriminant of the field in each fixed degree, allows for a universal ternary form to exist in every degree -- Kitaoka's conjecture for multiquadratic fields is still open.
	
	One can also hope to use the present result for an attack on Kitaoka's conjecture for a general totally real number field $K$: On the one hand, if $K$ is abelian, then it is contained in some cyclotomic field $\Q(\zeta_k)$ and hence also in its maximal totally real subfield $F_k=\Q(\zeta_k)^+$. On the other hand, and this is probably the more promising direction, it is natural to classify $K$ based on the real cyclotomic fields $F_k$ it contains, and to try to extend the argument for the non-existence of a universal ternary form over a maximal subfield $F_k$ to the same argument over $K$. We succeed with this idea in the case when $K$ contains a nonsquare totally positive unit, see Theorem \ref{th:general}.
	
	\section{Preliminaries}
	
	We already gave part of the necessary definitions at the beginning of the paper (in the second paragraph). Let us now continue with the rest.
	
	First of all, we already defined a quadratic form, but many of the modern results are formulated in a more general framework of quadratic lattices. A \emph{quadratic lattice} (or simply a lattice) is a pair $(L,Q)$ where $L$ is a finitely generated torsion-free $\OK$-module equipped with a \emph{quadratic map} $Q: L \to \OK$, i.e.\ a map such that $Q(\alpha\vu)=\alpha^2Q(\vu)$ and that the associated symmetric map $B_Q(\vu,\vv) = \frac12 ( Q(\vu+\vv)-Q(\vu)-Q(\vv))$ is bilinear. It is \emph{classical} if the image of $B_Q$ is contained in $\OK$, \emph{totally positive definite} if $Q(\vu) \in \OKPlus$ for all nonzero $\vu\in L$, and \emph{universal} if $Q(L) = \OKPlus \cup \{0\}$. The \emph{rank} of the lattice is the rank of the corresponding module, i.e.\ the dimension of the vector space $K \otimes L$; in particular, a lattice is \emph{ternary} is this rank is $3$.
	
	Every quadratic form $Q$ can be interpreted as a free quadratic lattice $(\OK^{\;\!n},Q)$, and the notions of classicality, universality, total positive definiteness and rank carry over. This also explains why classical quadratic forms are so important -- they correspond to integral symmetric bilinear forms. Conversely, every free quadratic lattice gives rise to a quadratic form simply by choosing a basis. As a trivial corollary, a field that admits a universal ternary form also admits a universal ternary lattice. Hence, Kitaoka's conjecture is sometimes formulated in a stronger version for lattices (cf. the original wording of Conjecture \ref{co:KitaokaStrong} in \cite[Conj.~1.2]{KK}). A significant part of our results handles this more general case (see Theorem \ref{th:general}), but sometimes we rely on results that have not been proven for non-free lattices.

	We use $\U_K$ for the group of units in $K$, and put $\UKPlus=\U_K \cap \OKPlus$ and $\UKctv=\{\eta^2 \mid \eta \in \UK\}$. Clearly, $\UKctv \subset \UKPlus$, and the size of the factorgroup $\UKPlus/\UKctv$ plays a significant role in the study of universal quadratic forms. If $\abs{\UKPlus/\UKctv} > 2$, then Kitaoka's conjecture trivially holds:
	
	\begin{proposition}[{\cite[Prop.~2.5]{KKK}}] \label{pr:KKK2.5}
		Let $K$ be a totally real number field. Assume that there exists a universal ternary lattice over $K$. Then $\abs{\UKPlus/\UKctv}\leq2$.
	\end{proposition}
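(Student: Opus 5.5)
The approach is to use the units of $K$ to produce many totally positive integers that the lattice $L$ can only represent in a restricted way, and then count dimensions. Suppose $\abs{\UKPlus/\UKctv}\ge 3$. Since $\UKPlus/\UKctv$ is an elementary abelian $2$-group, we can then choose units $1,\ve_1,\ve_2,\ve_3\in\UKPlus$ lying in four distinct square classes.

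The first step reduces to representations by vectors. If $Q(\vu)=\ve$ for some $\ve\in\UKPlus$, then the sublattice $\OK\vu$ is free of rank one, and $\vu$ represents exactly the values $\ve\alpha^2$. By universality, for each $i$ there is a vector $\vu_i\in L$ with $Q(\vu_i)=\ve_i$, where we set $\ve_0=1$.

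The second step shows that these four vectors are pairwise orthogonal. Take $i\ne j$ and put $b=B_Q(\vu_i,\vu_j)\in\OK$; here classicality of $L$ is used. The binary sublattice $\OK\vu_i+\OK\vu_j$ is totally positive definite, so $\ve_i\ve_j-b^2\in\OKPlus\cup\{0\}$. If this quantity were $0$, then $\ve_i\ve_j$ would be a square in $\OK$, which contradicts the choice of distinct square classes. Hence $\ve_i\ve_j-b^2$ is totally positive.

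This alone does not give $b=0$, so we need a different argument. Instead of fixing the vectors at the start, we choose the four units suitably among their square classes. Alternatively, we can use the following rank argument. Suppose $\vu_0,\dots,\vu_3$ span a space of dimension at most $3$. Then there is a nontrivial $K$-linear relation among them, and the $4\times4$ Gram determinant $\det(B_Q(\vu_i,\vu_j))$ vanishes. The main task is to force a contradiction from this, namely to show that units in four different square classes cannot all be represented inside a ternary lattice.

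The argument I would try first works with each $\vu_i$ separately. Look at the orthogonal complement $\vu_i^\perp$ in $L$; it has rank $2$. The units $\ve_j\ve_i^{-1}\cdot\eta^2$ cannot all be handled in a single complement. Splitting $L\otimes K=K\vu_0\perp W$, I would try to show that the other three units must project into the binary space $W$ in an incompatible way, using total positivity together with the fact that a determinant which is a unit times a square constrains square classes.

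Honestly, the cleanest route is probably local. At a dyadic or real place, count the square classes of units represented by $L$. A ternary space over $K_v$ represents a restricted set of classes, and combining this information over all places should bound the number of unit square classes by $2$.

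I expect the main obstacle to be exactly this bound: showing that a ternary lattice cannot represent units from more than two nontrivial totally positive square classes. To make it work, I would carry out a careful determinant and Hasse-invariant analysis of the spaces spanned by pairs and triples of the $\vu_i$.
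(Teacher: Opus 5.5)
\textbf{The gap.} Your proposal fails at the sentence \enquote{This alone does not give $b=0$}. It does, and this is the missing idea. Since $L$ is classical, $b=B_Q(\vu_i,\vu_j)\in\OK$. Total positivity of $\ve_i\ve_j-b^2$ means $0\le\sigma(b)^2<\sigma(\ve_i\ve_j)$ for every embedding $\sigma$. Multiplying over all embeddings gives $\NN_{K/\Q}(b)^2<\NN_{K/\Q}(\ve_i\ve_j)=1$, and since $\NN_{K/\Q}(b)\in\Z$, this forces $b=0$.

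Hence $\vu_0,\dots,\vu_3$ are pairwise orthogonal. Their Gram matrix is $\mathrm{diag}(\ve_0,\dots,\ve_3)$, which is nonsingular, so your own Gram-determinant idea finishes at once: four linearly independent vectors cannot lie in the three-dimensional space $KL$. The same argument shows that a classical universal lattice has rank at least $\abs{\UKPlus/\UKctv}$.

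Without this step the proposal contains no proof. The later paragraphs (orthogonal complements, splitting off $K\vu_0$, determinant and Hasse-invariant analysis) are only stated intentions. The local route cannot succeed on its own: every completion of a universal lattice is simply locally universal, and $KL$ must represent every totally positive element anyway. None of these local conditions refers to the global group $\UKPlus/\UKctv$. The obstruction is global, namely total positivity at all real places combined with integrality of $b$, and the norm argument is exactly what captures it.

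\textbf{Classicality.} Note that your orthogonality step uses classicality essentially, while the proposition as quoted does not assume it. For a non-classical lattice one only knows $2b\in\OK$, so the norm argument yields merely $\abs{\NN_{K/\Q}(2b)}<2^{[K:\Q]}$. Non-orthogonal pairs really occur: over $\qq{3}$ the form $x^2+xy+(2+\sqrt3)y^2$ is totally positive definite, because $4(2+\sqrt3)-1=(2+\sqrt3)^2$. Yet its basis vectors, of norms $1$ and $2+\sqrt3$ (different square classes), have inner product $\tfrac12$.

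So your argument, once repaired, covers classical lattices. That is the only case in which this paper invokes Proposition~\ref{pr:KKK2.5}; the paper itself gives no proof and cites \cite{KKK}.
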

	
	Thus, the study of Kitaoka's conjecture subdivides into two cases: Fields where $\UKPlus=\UKctv$, and fields with a totally positive unit that is not a square; by the above proposition we may assume that all such units differ only by multiplication by a square. This allows us to use \cite[Thm.~3.1]{KKK}, which will be one of our most important tools. 
	
	
	
	\section{Real cyclotomic fields}
	
	Let us now introduce our subject of study, the real cyclotomic fields. We write $\zeta_k = \ee^{2\pi \ii /k}$ for the primitive $k$th root of unity. Then $\Q(\zeta_k)$ is the \emph{$k$th cyclotomic field}; its degree is computed by the Euler function: $[\Q(\zeta_k):\Q] = \varphi(k)$. Its maximal totally real subfield is $\Q(\zeta_k)^+ := \Q(\zeta_k+\zeta_k^{-1})$; we usually just denote it $F_k$, and call it a \emph{real cyclotomic field}. In fact, if $k\geq 3$, then $[\Q(\zeta_k):F_k]=2$, which means that $\Q(\zeta_k)$ is a so-called CM-field.
	
	\begin{lemma}\label{le:ramification}
		Let $p\in\Z^+$ be a prime. If $p$ ramifies in $F_k$, then $p \mid k$. 
	\end{lemma}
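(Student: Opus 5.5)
The plan is to reduce the question to ramification in the full cyclotomic field $\Q(\zeta_k)$, using that ramification indices are multiplicative in towers. Suppose $p$ ramifies in $F_k$, so some prime $\mathfrak{P}$ of $F_k$ above $p$ has ramification index $e(\mathfrak{P}\mid p)>1$. For $k\geq 3$ we have the tower $\Q\subseteq F_k\subseteq\Q(\zeta_k)$. Take any prime $\mathfrak{Q}$ of $\Q(\zeta_k)$ lying above $\mathfrak{P}$. Then
\[
e(\mathfrak{Q}\mid p)=e(\mathfrak{Q}\mid\mathfrak{P})\,e(\mathfrak{P}\mid p)\geq e(\mathfrak{P}\mid p)>1,
\]
so $p$ ramifies in $\Q(\zeta_k)$.

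Next I will invoke the classical fact that $p$ ramifies in $\Q(\zeta_k)$ only if $p\mid\disc\Q(\zeta_k)$. One way to see this: $\zeta_k$ is a root of $x^k-1$, whose discriminant divides a power of $k$ up to sign. Hence $\disc\Q(\zeta_k)$ divides $\pm k^{\varphi(k)}$. Alternatively, one can use directly that for $p\nmid k$ the polynomial $x^k-1$ is separable modulo $p$. Since ramified primes divide the discriminant by Dedekind's theorem, we get $p\mid k$.

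The remaining cases are degenerate and I will treat them separately. If $k\in\{1,2\}$, then $F_k=\Q$, in which no prime ramifies, so the statement holds vacuously. (When $k=1$ or $2$ the tower also makes sense trivially.)

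There is no real obstacle here. The only step needing care is quoting the standard fact about ramification in cyclotomic fields, or the divisibility $\disc\Q(\zeta_k)\mid k^{\varphi(k)}$. Everything else is multiplicativity of ramification indices.
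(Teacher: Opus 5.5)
Your proof is correct and follows the paper's route. You pass from $F_k$ to $\Q(\zeta_k)$, making the tower step explicit via multiplicativity of ramification indices where the paper leaves it implicit, and then use that a prime ramified in $\Q(\zeta_k)$ divides $\disc\Q(\zeta_k)$, whose prime factors all divide $k$. One small quibble: the argument via $x^k-1$ gives $\disc\Q(\zeta_k)\mid \pm k^{k}$ rather than $\pm k^{\varphi(k)}$ (the latter follows from the explicit formula the paper cites), but either divisibility suffices for the conclusion.
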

	\begin{proof}
		If $k=1$ or $k=2$, then $F_k=\Q$ and the statement is trivial. Assume $k\geq 3$. The well-known discriminant formula for cyclotomic fields \cite[Prop.~2.7]{Washington}
		\[
		\disc \Q(\zeta_k) = (-1)^{\varphi(k)/2} \frac{k^{\varphi(k)}} {\prod\limits_{p \mid k} p^{\varphi(k)/(p-1)}}.
		\]
		implies that $p$ ramifies in $\Q(\zeta_k)$ if and only if $p$ divides $k$. Since $\Q \subset F_k \subset \Q(\zeta_k)$, a prime ramified in $F_k$ is ramified in $\Q(\zeta_k)$, which yields the result.
	\end{proof}
	
	We have $F_1=F_2=F_3=F_4=F_6=\Q$ and $F_5 = \qq{5}$, $F_8=\qq{2}$ and $F_{12}=\qq{3}$. For all other values, $[F_k:\Q] = \varphi(k)/2 > 2$, which allows to reformulate Theorem \ref{th:main} as \enquote{$F_k$ admits a classical universal ternary form if and only if it is a quadratic field}. The cubic field $F_7$ with discriminant $49$ occurs as an exceptional field in many different contexts, in particular in connection with the lifting problem for quadratic forms, see \cite{GT, KKL, KL, KY-lifting, KY-weak, KY-EvenBetter}, but also with class number of sums of squares \cite{Pe2} and therefore with Pythagoras number \cite{Kr}. The quartic field $F_{20} = \Q\bigl(\!\sqrt{(5+\smash[t]{\sqrt5})/2} \mskip2mu\bigr)$ is one of the very few fields where sums of squares can be characterised by local conditions \cite{Sch-thesis, Sch}; all the six mentioned $F_k$ are among the only eight known fields where all of $2\OKPlus$ are sums of squares \cite{KY-EvenBetter}.
	
	Real cyclotomic fields do not share all the nice properties of cyclotomic fields -- for example, the composite of two real cyclotomic fields is not necessarily real cyclotomic; e.g., the biquadratic field $\Q(\!\sqrt{2},\sqrt{5})$ is not. However, we still have the inclusions one would expect:
	
	\begin{lemma} \label{le:inclusions}
		The fields $F_k = \Q(\zeta_k)^+$ have the following properties:
		\begin{enumerate}
			\item If $m \mid k$, then $F_m \subset F_k$. \label{it:inclusion}
			\item $F_{2k}=F_k$ if and only if $k$ is odd (or $k=2$). \label{it:2k=k}
		\end{enumerate}
	\end{lemma}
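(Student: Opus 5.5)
For part (a), the plan is to exhibit a generator of $F_m$ inside $F_k$. If $m \mid k$, write $k = m\ell$; then $\zeta_k^{\ell}$ is a primitive $m$th root of unity. Since $\zeta_m = \zeta_k^{\ell}$ with our normalisation $\zeta_k = \ee^{2\pi\ii/k}$, we get
\[
\zeta_m + \zeta_m^{-1} = \zeta_k^{\ell} + \zeta_k^{-\ell}.
\]
I will show that this lies in $F_k$ by proving that $\zeta_k^{j}+\zeta_k^{-j} \in \Q(\zeta_k+\zeta_k^{-1})$ for every $j \geq 0$. This follows by induction from the identity
\[
(\zeta_k^{j}+\zeta_k^{-j})(\zeta_k+\zeta_k^{-1}) = (\zeta_k^{j+1}+\zeta_k^{-j-1}) + (\zeta_k^{j-1}+\zeta_k^{-j+1}),
\]
so each $\zeta_k^{j}+\zeta_k^{-j}$ is an integer polynomial (a Chebyshev-type polynomial) in $\zeta_k+\zeta_k^{-1}$. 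Hence $F_m \subset F_k$.

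For part (b), suppose first that $k$ is odd. Then $-\zeta_k$ is a primitive $2k$th root of unity; concretely, $\zeta_{2k} = -\zeta_k^{(k+1)/2}$. It follows that
\[
\zeta_{2k} + \zeta_{2k}^{-1} = -\bigl(\zeta_k^{(k+1)/2} + \zeta_k^{-(k+1)/2}\bigr),
\]
which lies in $F_k$ by the argument of part (a). So $F_{2k} \subset F_k$, and the reverse inclusion is part (a) itself. The case $k=2$ is immediate, since $F_2 = F_4 = \Q$.

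Conversely, suppose $k \geq 3$ is even and $k \neq 2$. The plan is a degree comparison. Because $k$ is even, $\varphi(2k) = 2\varphi(k)$. Since $k \geq 3$ gives $2k \geq 3$ as well, both fields have degree half the corresponding Euler function, and therefore
\[
[F_{2k}:\Q] = \varphi(2k)/2 = \varphi(k) > \varphi(k)/2 = [F_k:\Q],
\]
so $F_{2k} \neq F_k$.

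The only points needing care are the small cases. The degree formula $[F_k:\Q] = \varphi(k)/2$ holds only for $k \geq 3$, while $k=1$ is odd and $F_1 = F_2 = \Q$; both are consistent with the statement. I do not expect any genuine obstacle here.
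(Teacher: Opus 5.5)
Your proof is correct. It differs from the paper's mainly in how you obtain the inclusions.

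The paper proves (a) in one line by intersecting $\Q(\zeta_m)\subset\Q(\zeta_k)$ with $\R$. This relies on the identification $F_k=\Q(\zeta_k)\cap\R$, i.e.\ that $\Q(\zeta_k+\zeta_k^{-1})$ is the \emph{maximal} real subfield. It then settles both directions of (b) with a single degree count: since $F_k\subset F_{2k}$, equality holds if and only if $\varphi(2k)=\varphi(k)$, with $k\le 2$ treated separately.

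You instead prove (a) directly via the Chebyshev recursion. This avoids that identification altogether and even gives the sharper integral statement $\zeta_m+\zeta_m^{-1}\in\Z[\zeta_k+\zeta_k^{-1}]$. For the ``if'' direction of (b) you use the explicit identity $\zeta_{2k}=-\zeta_k^{(k+1)/2}$ rather than equality of degrees. Only your ``only if'' direction uses $[F_k:\Q]=\varphi(k)/2$ together with $\varphi(2k)=2\varphi(k)$ for even $k$, exactly as in the paper. Your treatment of the small cases $k=1,2$ is also correct.

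The paper's version is shorter given the standard facts about cyclotomic fields; yours is more self-contained and constructive.
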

	\begin{proof}
		We have $\Q(\zeta_m) \subset \Q(\zeta_k)$, so the same property holds for their intersections with $\R$. For the second part, we already know $F_k \subset F_{2k}$, so the fields are equal if and only if they have the same degree, i.e.\ if $\varphi(2k)=\varphi(k)$ (or if $k\leq2$). 
	\end{proof}
	
	Universal quadratic forms are intimately connected with indecomposable algebraic integers. We call $\alpha\in\OKPlus$ \emph{indecomposable} if it cannot be written as a sum of two elements of $\OKPlus$. Another of our crucial tools is the following result that provides some explicit indecomposable elements in $F_k$.
	
	\begin{proposition}[{\cite[Prop.~3.9]{KK}}] \label{pr:KKKcyclo}
		For $k\geq3$, define
		\[
		\alpha_k=2+\zeta_k^{}+\zeta_k^{-1} \quad \text{and} \quad \beta_k=2-\zeta_k^{}-\zeta_k^{-1}.
		\]
		Then the following hold:
		\begin{enumerate}
			\item $\alpha_k,\beta_k\in\OKPlus[F_k]$; \label{pr:KKKcyclo-totpos}
			\item $\NN_{F_k/\Q}(\alpha_k)=\begin{cases}
				p & \text{if } k=2p^n \text{ (where $p$ is a prime and $n\geq1$),}\\
				1 & \text{otherwise;}
			\end{cases} 
			$
			
			\vspace{2pt}\noindent $\NN_{F_k/\Q}(\beta_k)=\begin{cases}
				p & \text{if } k=p^n \text{ (where $p$ is a prime and $n\geq1$),}\\
				1 & \text{otherwise;}
			\end{cases}
			$
			\label{pr:KKKcyclo-norm}
			\vspace{2pt}
			\item $\Tr_{F_k/\Q}(\alpha_k)=2[F_k:\Q]+\mu(k)$ and $\Tr_{F_k/\Q}(\beta_k)=2[F_k:\Q]-\mu(k)$, where $\mu(k)$ is the M\"obius function. \label{pr:KKKcyclo-trace}
			\item If $[F_k:\Q]>2$, then both $\alpha_k$ and $\beta_k$ are indecomposable in $F_k$. \label{pr:KKKcyclo-indec}
		\end{enumerate}
	\end{proposition}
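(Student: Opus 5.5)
The plan is to treat all four parts through the factorisations
\[
\alpha_k=(1+\zeta_k)(1+\zeta_k^{-1})=\NN_{\Q(\zeta_k)/F_k}(1+\zeta_k),\qquad
\beta_k=(1-\zeta_k)(1-\zeta_k^{-1})=\NN_{\Q(\zeta_k)/F_k}(1-\zeta_k).
\]
These reduce everything to cyclotomic polynomials, and indecomposability will then follow from a norm bound.

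For (a), the embeddings of $F_k$ send $\zeta_k+\zeta_k^{-1}$ to $2\cos(2\pi j/k)$ with $\gcd(j,k)=1$. Hence the conjugates of $\alpha_k$ and $\beta_k$ are $2\pm2\cos(2\pi j/k)$. Since $k\geq3$ and $\gcd(j,k)=1$, the number $j/k$ is neither an integer nor a half-integer, so $\cos(2\pi j/k)\neq\pm1$ and all conjugates are positive. Integrality is clear.

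For (b), transitivity of norms gives $\NN_{F_k/\Q}(\beta_k)=\NN_{\Q(\zeta_k)/\Q}(1-\zeta_k)=\Phi_k(1)$. It is classical that $\Phi_m(1)=p$ if $m=p^n$ and $\Phi_m(1)=1$ otherwise, for $m\geq2$.
\begin{itemize}
\item For $\alpha_k$, write $1+\zeta_k=1-\omega$ with $\omega=-\zeta_k$. Then $\omega$ is a primitive $k'$th root of unity, where $k'=2k$ for $k$ odd, $k'=k/2$ for $k\equiv2\pmod 4$, and $k'=k$ for $4\mid k$.
\item In each case $\Q(\omega)=\Q(\zeta_k)$, so $\NN_{F_k/\Q}(\alpha_k)=\Phi_{k'}(1)$.
\item It remains to check that $k'$ is a prime power exactly when $k=2p^n$ with $n\geq1$. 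If $k$ is odd, $2k$ is never a prime power because $k\geq3$. If $k\equiv2\pmod4$, we need $k/2=p^n$ with $p$ odd. If $4\mid k$, we need $k=2^{n+1}$, which is $2p^n$ with $p=2$.
\end{itemize}

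For (c), we have $\Tr_{F_k/\Q}(\zeta_k+\zeta_k^{-1})=\Tr_{\Q(\zeta_k)/\Q}(\zeta_k)$. This is the sum of the primitive $k$th roots of unity, i.e. the Ramanujan sum $c_k(1)=\mu(k)$. Adding $2[F_k:\Q]$ gives both formulas.

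For (d), I would use superadditivity of the geometric mean of the conjugates. If $\alpha=\beta+\gamma$ with $\beta,\gamma\in\OKPlus$ and $d=[K:\Q]$, then
\[
\NN(\alpha)^{1/d}\geq\NN(\beta)^{1/d}+\NN(\gamma)^{1/d}\geq 2,
\]
because norms of nonzero totally positive integers are positive integers. So every element with $\NN(\alpha)<2^d$ is indecomposable.

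By (b), the norms of $\alpha_k$ and $\beta_k$ are $1$ or a prime $p$ with $k=p^n$ or $2p^n$. In the latter case $d=\varphi(k)/2\geq p^{n-1}(p-1)/2$. The main (though mild) obstacle is verifying $p<2^d$ under the hypothesis $d>2$:
\begin{itemize}
\item For $p\geq11$ this follows from $2^{(p-1)/2}>p$.
\item For $p=7$ we have $d\geq3$ and $7<8$.
\item For $p=5$, the assumption $d>2$ forces $n\geq2$, so $d\geq10$.
\item For $p=3$, $d>2$ forces $d\geq3$ and $3<8$.
\item For $p=2$, $d\geq4$ and $2<16$.
\end{itemize}
This case check should complete the proof.
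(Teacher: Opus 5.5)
Your proof is correct and complete: the factorisations $\alpha_k=\NN_{\Q(\zeta_k)/F_k}(1+\zeta_k)$ and $\beta_k=\NN_{\Q(\zeta_k)/F_k}(1-\zeta_k)$ reduce the norms to $\Phi_{k'}(1)$, and the trace reduces to the Ramanujan sum $\mu(k)$. Indecomposability then follows from the criterion $\NN(\alpha)<2^{d}$, and your case check on $p$ is accurate; $d>2$ is genuinely needed only for $p=5$ (e.g.\ $\beta_5=1+(3-\sqrt5)/2$). The paper itself gives no proof of this proposition but imports it from \cite[Prop.~3.9]{KK}, so there is no in-paper argument to compare; your route agrees with the remark after the statement ($-\zeta_k$ is a primitive $2k$th, $(k/2)$th or $k$th root of unity) and with the norm-versus-$2^{d}$ comparison used in Lemma~\ref{le:eps}.
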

	
	Obviously $F_k=\Q(\alpha_k)=\Q(\beta_k)$, hence by Lemma \ref{le:inclusions}, $F_k$ contains all $\alpha_i$ and $\beta_i$ for $i \mid k$. That might seem to produce many indecomposable elements in $F_k$; however, the statement only claims indecomposability in $F_i$, not in its extensions. Let us also remark that $\alpha_k$ and $\beta_k$ are intimately connected: Depending on $k$ (more specifically its $2$-valuation), one easily sees that $-\zeta_k$ is either a $2k$th, or a $(k/2)$th, or a $k$th primitive root of unity; it follows that if $4 \mid k$, then $\alpha_k$ and $\beta_k$ are conjugates, while if $k$ is odd, then $\alpha_k$ is a conjugate of $\beta_{2k}$ and vice versa.
	
	It will be important for us to see when $\alpha_k$ is a square. The answer is contained in the following lemma.
	
	\begin{lemma}\label{le:whensquare}
		Let $k\geq 3$ and consider $\alpha_k=2+\zeta_k^{}+\zeta_k^{-1}$. Then:
		\begin{enumerate}
			\item $\qq{\alpha_k}=F_{2k}$; \label{it:sqrt}
			\item $\alpha_k$ is a square in $F_k$ if and only if $k$ is odd. \label{it:whensquare}
		\end{enumerate}
	\end{lemma}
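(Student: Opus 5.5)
The key identity is $\alpha_k = 2+\zeta_k+\zeta_k^{-1} = \zeta_k^{-1}(1+\zeta_k)^2$. Writing $\zeta_k = \zeta_{2k}^2$, we get
\[
\alpha_k = \zeta_{2k}^{-2}(1+\zeta_{2k}^2)^2 = \bigl(\zeta_{2k}^{-1}+\zeta_{2k}^{}\bigr)^2 .
\]
Hence $\sqrt{\alpha_k} = \pm(\zeta_{2k}+\zeta_{2k}^{-1})$. This single computation drives both parts.

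For \ref{it:sqrt}, set $\gamma=\zeta_{2k}+\zeta_{2k}^{-1}$, so that $\qq{\alpha_k}=\Q(\alpha_k,\gamma)$. Since $\alpha_k=\gamma^2\in\Q(\gamma)=F_{2k}$, this field equals $F_{2k}$. The sign ambiguity of the square root is irrelevant, because $\Q(\alpha_k)=F_k\subset F_{2k}$ (Lemma~\ref{le:inclusions}\ref{it:inclusion}) and $-\gamma$ generates the same field. For $k\geq3$ we have $\alpha_k\neq0$, so $\sqrt{\alpha_k}$ is well defined up to sign.

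For \ref{it:whensquare}, note that $\alpha_k$ is a square in $F_k$ iff $\gamma\in F_k$, i.e.\ iff $\qq{\alpha_k}=F_k$. By \ref{it:sqrt} this holds iff $F_{2k}=F_k$. By Lemma~\ref{le:inclusions}\ref{it:2k=k}, this happens iff $k$ is odd, the exceptional case $k=2$ being excluded by $k\geq3$.

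A square of a field element that is an algebraic integer is the square of an integer, since $\gamma$ is integral. So "square in $F_k$" carries no ambiguity between $F_k$ and $\OK[F_k]$.

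There is essentially no obstacle. The only point needing care is that "square in $F_k$" should mean $\gamma$ or $-\gamma$ lies in $F_k$, and both generate the same field. I would also check the degree claim $\varphi(2k)=\varphi(k)\iff k$ odd, which is already encoded in Lemma~\ref{le:inclusions}.
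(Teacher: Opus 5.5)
Your proposal is correct and follows essentially the same route as the paper. The identity $\alpha_k=(\zeta_{2k}^{}+\zeta_{2k}^{-1})^2$ gives part (a) directly, and part (b) reduces to $F_{2k}\subseteq F_k$, which Lemma~\ref{le:inclusions}\ref{it:2k=k} settles; your side remarks on the sign of the square root and on integrality are harmless but not needed.
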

	\begin{proof}
		Directly from the definition we get
		\[
		\alpha_k = 2 + {\ee}^{2\pi \ii/k} + {\ee}^{-2\pi \ii/k} = ({\ee}^{\pi \ii/k} + {\ee}^{-\pi \ii/k} )^2 = (\zeta_{2k}^{}+\zeta_{2k}^{-1})^2.
		\]
		This yields $\sqrt{\alpha_k}=\pm(\zeta_{2k}+\zeta_{2k}^{-1})$, which is a generator of $F_{2k}$. This proves the first part.
		
		In particular, $\alpha_k$ is a square in $F_k$ if and only if $F_k$ contains $F_{2k}$; by Lemma \ref{le:inclusions}\ref{it:2k=k}, this happens if and only if $k$ is odd (in which case these two fields are identical). This proves the second part.
	\end{proof}
	
	
	We shall not need it, but it is interesting to note that the identity from the proof can also be written as $\alpha_k = 2(1+\cos\frac{2\pi}{k}) = ( 2\cos\frac{\pi}{k})^2$. For us, the crucial corollary of Lemma \ref{le:whensquare} is the following:
	
	\begin{lemma}\label{le:nonsquare}
		\begin{enumerate}
			\item If $k>2$ is even and not of the form $k=2p^n$ for a prime $p$, then $F_k$ contains a nonsquare totally positive unit, namely $\alpha_k=2+\zeta_k+\zeta_k^{-1}$. \label{it:nonsquare-a}
			\item If $F$ is a real cyclotomic field where $\UKPlus[F] = \UKctv[F]$, then $F = F_{p^n}$ for a prime $p$ and $n\in\Z^+$. \label{it:nonsquare-b}
			\item In particular, if $k\neq 2^n$ and $4 \mid k$, then $\UKPlus[F_k]\neq\UKctv[F_k]$. \label{it:nonsquare-c}
		\end{enumerate}
	\end{lemma}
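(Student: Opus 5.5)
For part (a), I will use Proposition \ref{pr:KKKcyclo} together with Lemma \ref{le:whensquare}. Let $k>2$ be even and not of the form $2p^n$. By Proposition \ref{pr:KKKcyclo}\ref{pr:KKKcyclo-totpos}, $\alpha_k\in\OKPlus[F_k]$. By Proposition \ref{pr:KKKcyclo}\ref{pr:KKKcyclo-norm}, $\NN_{F_k/\Q}(\alpha_k)=1$, because $k$ is not of the form $2p^n$. A totally positive algebraic integer of norm $1$ is a unit, so $\alpha_k\in\UKPlus[F_k]$. Finally, Lemma \ref{le:whensquare}\ref{it:whensquare} shows that $\alpha_k$ is not a square in $F_k$ since $k$ is even. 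This settles (a). The only small point to check is the edge cases: for even $k>2$ not of the form $2p^n$ we automatically have $k\geq 12$, and $k\ge3$ is all that Proposition \ref{pr:KKKcyclo} requires.

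For part (b), let $F=F_k$ with $\UKPlus[F]=\UKctv[F]$. First I will normalise $k$. If $k$ is odd, Lemma \ref{le:inclusions}\ref{it:2k=k} gives $F_k=F_{2k}$, so we may replace $k$ by $2k$ and assume $k$ is even; the cases $k\le 2$ give $F=\Q=F_{2}=F_{2^1}$, which already has the required form. Now take $k$ even with $k>2$. By the contrapositive of part (a), $k=2p^n$ for a prime $p$ and some $n\ge1$ (the exponent could a priori be $0$, but $k>2$ excludes that). If $p=2$, then $k=2^{n+1}$ is a prime power and we are done. If $p$ is odd, then $p^n$ is odd, so Lemma \ref{le:inclusions}\ref{it:2k=k} gives $F_k=F_{2p^n}=F_{p^n}$. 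In either case $F=F_{q^m}$ for a prime $q$ and $m\in\Z^+$. Since every real cyclotomic field is $F_k$ for some $k$, this proves (b).

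For part (c), let $4\mid k$ and $k\neq 2^n$. Then $k$ is even and $k>2$. Suppose $k=2p^n$. Since $4\mid k$ we need $2\mid p^n$, which forces $p=2$ and hence $k=2^{n+1}$, a contradiction. So $k$ is not of the form $2p^n$, and part (a) provides the nonsquare totally positive unit $\alpha_k$. Hence $\UKPlus[F_k]\neq\UKctv[F_k]$.

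No step is a real obstacle. The only care needed is the bookkeeping of small $k$ and of the "$n\ge1$" convention in Proposition \ref{pr:KKKcyclo}\ref{pr:KKKcyclo-norm}, so that the norm computation really gives $1$ in case (a).
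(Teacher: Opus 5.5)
Your proposal is correct and follows essentially the same route as the paper: (a) via total positivity, the norm formula and Lemma~\ref{le:whensquare}; (b) by reducing to even $k>2$ and applying the contrapositive of (a), then splitting into $p=2$ and $p$ odd; (c) as a direct consequence of (a). The only differences are cosmetic. You dispose of $k\le 2$ directly as $F_2=F_{2^1}$ rather than passing to $F_4$, and you spell out the short argument for (c) that the paper calls immediate.
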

	\begin{proof}
		Total positivity of $\alpha_k$ for $k \geq 3$ is contained in Proposition \ref{pr:KKKcyclo}\ref{pr:KKKcyclo-totpos}. By Proposition \ref{pr:KKKcyclo}\ref{pr:KKKcyclo-norm} and Lemma \ref{le:whensquare}\ref{it:whensquare}, we see that if $k$ is even and not of the form $2p^n$, then $\alpha_k$ is a nonsquare unit in $F_k$. This proves \ref{it:nonsquare-a}. Part \ref{it:nonsquare-c} is an immediate trivial corollary.
		
		To prove \ref{it:nonsquare-b}, assume $\UKPlus[F] = \UKctv[F]$ and write $F=F_k$; thanks to the identity $F_{k}=F_{2k}$ for odd $k$ (Lemma \ref{le:inclusions}\ref{it:2k=k}), we can assume that $k$ is even. Also, $F_1=F_2=F_4$ allows us to assume $k > 2$. Hence, part \ref{it:nonsquare-a} yields $k=2p^n$ for a prime $p$. Thus either $F=F_k=F_{2^{n+1}}$, or $p$ is odd and $F=F_{2p^n}=F_{p^n}$.
	\end{proof}
	
	We remark that, somewhat annoyingly, part \ref{it:nonsquare-b} can not be formulated simply as \enquote{if $k\neq p^n$, then $\UKPlus[F_k]\neq\UKctv[F_k]$}, since one must also exclude the case $k=2p^n$. One can also note that it is not an equivalence; for instance, $F_{29}$ contains nonsquare totally positive units.

	\section{The proof}
	
	After this preparation, we are able to start the proof of our main result.
	
	\begin{lemma}\label{le:eps}
		Let $K$ be a totally real number field where $\UKPlus \neq \UKctv$. Assume $F_{p^n} \subset K$ for a prime $p$ and $n \in \Z^+$, where
		\begin{enumerate}
			\item $p\geq 7$, or \label{it:>7}
			\item $p \in \{3,5\}$, $n\geq 2$, or \label{it:35}
			\item $p=2$, $n \geq 3$. \label{it:2}
		\end{enumerate}
		Then $K$ admits no classical \UTL.
	\end{lemma}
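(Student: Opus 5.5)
The plan is to argue by contradiction: suppose $L$ is a classical universal ternary lattice over $K$. Since $\UKPlus\neq\UKctv$, Proposition~\ref{pr:KKK2.5} gives $\UKPlus=\UKctv\cup\varepsilon\UKctv$ for a single nonsquare totally positive unit $\varepsilon$. This is exactly the setting of \cite[Thm.~3.1]{KKK}, and I would invoke it first.

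The expected output of that theorem is a rigid description of $L$. Because $L$ is classical and represents the units $1$ and $\varepsilon$, which lie in different square classes, it should contain an orthogonal sublattice $\langle 1,\varepsilon\rangle$, and $L$ should split as $\langle 1,\varepsilon\rangle\perp M$ with $M$ of rank one. Consequently, every element of $\OKPlus$ of small trace (in particular every indecomposable) would have to be of the form $x^2$, $\varepsilon y^2$, $x^2+\varepsilon y^2$, or come essentially from the rank-one part $M$. I will take this as the black-box consequence of \cite[Thm.~3.1]{KKK}.

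Next I would exhibit elements of $K$ that cannot be represented in this shape. The natural candidates come from Proposition~\ref{pr:KKKcyclo} applied in $F_{p^n}\subset K$: $\beta_{p^n}$ has norm $p$ over $F_{p^n}$, and $\alpha_{p^n}$ is a unit. By Lemma~\ref{le:whensquare}, $\alpha_{p^n}$ is a square for odd $p$ but not for $p=2$. Relevant quantities:
\begin{itemize}
\item $\beta_{p^n}$ is never a square, because its norm is not a square;
\item the trace of $\beta_{p^n}$ is $2d-\mu(p^n)$ with $d=[F_{p^n}:\Q]$, and over $K$ the trace is multiplied by $[K:F_{p^n}]$.
\end{itemize}
I would then compare traces. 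If $\beta_{p^n}=x^2+\varepsilon y^2$, or if $\beta_{p^n}=\gamma z^2$ with $\gamma$ the generator of $M$, one obtains trace inequalities: a nonzero square of an integer in a totally real field has trace at least the degree (by AM–GM), with near-equality only for units. I would also use that $\beta_{p^n}$ and several of its Galois conjugates (for instance $2-\zeta^j-\zeta^{-j}$ for suitable $j$) are all indecomposable in $F_{p^n}$ with pairwise different square classes. The hypotheses (a)--(c) are exactly what guarantees $d\ge3$, so the elements are indecomposable, and that there are enough conjugates with distinct square classes. A rank-one $M$ can absorb only one such square class, and $\langle1,\varepsilon\rangle$ can absorb at most two more, so counting gives a contradiction. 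The excluded cases $F_3,F_4,F_5,F_9$-type small fields would fail because there are too few conjugate square classes.

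The main obstacle is that indecomposability is proved in $F_{p^n}$, not in $K$. An element indecomposable in $F_{p^n}$ may decompose in $K$, so the step ``must be represented by a single orthogonal summand'' is not automatic. My first approach would be to avoid needing indecomposability in $K$. Instead I would use the norm of each candidate element (this norm is multiplicative under the extension $K/F_{p^n}$) together with its square class, and argue via the local (dyadic or $p$-adic) behaviour at the prime above $p$: representations by $\langle1,\varepsilon\rangle\perp M$ impose constraints on which square classes occur, and these can be checked in the completion of $K$ at primes above $p$. If that fails, I would instead take norms or traces from $K$ down to $F_{p^n}$ and show that a representation in $K$ induces an impossible trace bound.
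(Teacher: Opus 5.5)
There is a genuine gap: the proposal never reaches a contradiction. Your hand-built argument rests on a guessed decomposition $\langle 1,\ve\rangle\perp M$. You then want to force $\beta_{p^n}$ and several conjugates $2-\zeta^j-\zeta^{-j}$ into single orthogonal summands and count square classes. This fails for two concrete reasons.

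\begin{enumerate}
\item As you note yourself, indecomposability proved in $F_{p^n}$ need not persist in $K$. Your fallbacks (local analysis above $p$, trace descent) are never specified.
\item The counting cannot work even in principle. For a prime power $p^n$, the quotient of two such conjugates is a cyclotomic unit, and it is totally positive. So it lies in $\UKPlus=\UKctv\cup\ve\UKctv$ once Proposition~\ref{pr:KKK2.5} has been applied. Hence in $K$ all conjugates of $\beta_{p^n}$ occupy at most two square classes, not ``pairwise different'' ones.
\end{enumerate}

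You also misread the hypotheses. They are not what guarantees $d\ge3$: for $p=2$, $n=3$ the field $F_8=\qq{2}$ is quadratic. And $F_9$ is not an excluded case; it is case (b). Moreover $\NN(\beta_{2^n})=2$ is a power of $2$, so no argument built on $\beta_{2^n}$ can handle case (c).

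The missing ingredients are two specific parts of \cite[Thm.~3.1]{KKK}, available once you have reduced to $\abs{\UKPlus/\UKctv}=2$:
\begin{enumerate}
\item[(f)] A classical \UTL{} forces $\sqrt2\notin K$.
\item[(h)] A classical \UTL{} forces every $\beta\in\OKPlus$ with $\NN_{K/\Q}(\beta)<2^{[K:\Q]}$ to have norm a power of $2$.
\end{enumerate}
Case (c) is then immediate from (f), since $\qq{2}=F_8\subseteq F_{2^n}\subseteq K$.

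In cases (a) and (b), take $\beta=\beta_{p^n}$. Its norm $\NN_{K/\Q}(\beta)=p^{[K:F_{p^n}]}$ is not a power of $2$. Hypotheses (a) and (b) are exactly equivalent, for odd $p$, to $p<2^{[F_{p^n}:\Q]}=2^{p^{n-1}(p-1)/2}$. Raising this to the power $[K:F_{p^n}]$ gives $p^{[K:F_{p^n}]}<2^{[K:\Q]}$, so (h) applies.

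This norm criterion does not care how $\beta$ decomposes in $K$, which is precisely the obstacle you could not get around. Your remark that norms are multiplicative in $K/F_{p^n}$ was the right thread, but without (h) it has nothing to attach to.
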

	\begin{proof}
		If $\abs{\UKPlus / \UKctv} > 2$, then $K$ trivially admits no classical \UTL{}, see Proposition~\ref{pr:KKK2.5}. Thus we can assume $\abs{\UKPlus / \UKctv} = 2$. This allows us to use \cite[Thm.~3.1]{KKK}.
		
		If condition \ref{it:2} is satisfied, then $\qq{2}=F_8 \subset K$, and thus $\sqrt2 \in K$. Hence $K$ admits no classical \UTL{} by \cite[Thm.~3.1(f)]{KKK}.
		
		If condition \ref{it:>7} or \ref{it:35} is satisfied, then we are going to exhibit $\beta \in \OKPlus$ with $\NN_{K/\Q}(\beta) < 2^{[K:\Q]}$ for which $\NN_{K/\Q}(\beta)$ is not a power of two. If this is achieved, then $K$ admits no classical \UTL{} by \cite[Thm.~3.1(h)]{KKK}.
		
		In both cases, we put $\beta = \beta_{p^n}$ as in Proposition \ref{pr:KKKcyclo}. It is a totally positive integer by Proposition \ref{pr:KKKcyclo}\ref{pr:KKKcyclo-totpos}. Using the formula $\NN_{K/\Q}(\alpha)=\NN_{F_{p^n}/\Q}(\alpha)^{[K:F_{p^n}]}$, valid for every $\alpha\in F_{p^n}$, we easily see that it is enough to prove that $\NN_{F_{p^n}/\Q}(\beta)$ is not a power of $2$ and that it is $<2^{[F_{p^n}:\Q]}$. We know
		\[
		\NN_{F_{p^n}/\Q}(\beta) = p
		\]
		by Proposition \ref{pr:KKKcyclo}\ref{pr:KKKcyclo-norm}. Thus it is clearly not a power of $2$. As for the inequality, we have
		\[
		[F_{p^n}:\Q]=\frac{\varphi(p^n)}{2} = p^{n-1}\frac{p-1}2,
		\]
		and one easily verifies that indeed $p < 2^{(p-1)/2}$ if $p\geq 7$, and $p < 2^{p(p-1)/2}$ if $p > 2$. This concludes the proof.
	\end{proof}
	
	We are able to strengthen this lemma by considering all real cyclotomic subfields $F_k$, not only those where $k$ is a prime power.
	
	\begin{theorem}\label{th:general}
		Let $K$ be a totally real number field where $\UKPlus \neq \UKctv$. Assume $F_k \subset K$ for some $F_k$ other than $\Q$, $\qq{3}$ and $\qq{5}$. Then $K$ does not admit a classical \UTL.
	\end{theorem}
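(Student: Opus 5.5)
The plan is to reduce to finitely many small real cyclotomic fields by means of Lemma~\ref{le:eps}, and then treat the few remaining fields by hand with \cite[Thm.~3.1]{KKK}. Write $k=\prod p_i^{n_i}$. Since $p_i^{n_i}\mid k$, Lemma~\ref{le:inclusions}\ref{it:inclusion} gives $F_{p_i^{n_i}}\subset F_k\subset K$. If some prime power $p_i^{n_i}$ satisfies one of the conditions \ref{it:>7}, \ref{it:35}, \ref{it:2} of Lemma~\ref{le:eps}, then that lemma applies directly and we are done.

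Otherwise $k$ has no prime factor $\geq 7$, the $3$- and $5$-parts of $k$ are at most $3$ and $5$, and the $2$-part divides $4$; hence $k\mid 60$. Using $F_1=F_2=F_3=F_4=F_6=\Q$, $F_5=F_{10}=\qq5$, $F_{12}=\qq3$ and $F_{15}=F_{30}$, the only fields not excluded by hypothesis are $F_{15}$, $F_{20}$ and $F_{60}$. Since $F_{20}\subset F_{60}$, it suffices to show that no $K$ with $\UKPlus\neq\UKctv$ containing $F_{15}$ or $F_{20}$ admits a classical \UTL. As in Lemma~\ref{le:eps}, by Proposition~\ref{pr:KKK2.5} we may assume $\abs{\UKPlus/\UKctv}=2$, so that \cite[Thm.~3.1]{KKK} is available.

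Both $F_{15}$ and $F_{20}$ are quartic, so I would again aim to apply \cite[Thm.~3.1(h)]{KKK}. For this we need $\beta\in\OKPlus[F]$, with $F\in\{F_{15},F_{20}\}$, whose norm $\NN_{F/\Q}(\beta)$ is less than $16$ and is not a power of $2$. The inequality and the non-power-of-two property then transfer to $K$ via $\NN_{K/\Q}(\beta)=\NN_{F/\Q}(\beta)^{[K:F]}$, exactly as in the proof of Lemma~\ref{le:eps}.

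The main obstacle is that the elements of Proposition~\ref{pr:KKKcyclo} do not work here. For $k=15$ and $k=20$, both $\alpha_k$ and $\beta_k$ have norm $1$, while $\beta_5\in F_5$ has norm $25$ in the quartic field, which exceeds $16$. So I would search directly for a totally positive element of norm $3$, $5$, $9$ or $11$. The first candidate is a generator of a prime above $5$ of residue degree $1$ (plausible because $5$ ramifies in $F_{15}$ and $F_{20}$), multiplied by a suitable unit to make it totally positive. Concretely, I would try small combinations such as $\beta_5\cdot\eta$ or $1+\zeta+\zeta^{-1}+\dots$ expressed in the integral basis $\zeta_k^j+\zeta_k^{-j}$, and check their conjugates numerically.

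If no such element exists in one of the two fields, I would fall back on the remaining parts of \cite[Thm.~3.1]{KKK}. In particular, I would exploit the explicit nonsquare totally positive units available there: by Lemma~\ref{le:nonsquare}\ref{it:nonsquare-a}, $\alpha_{20}$ is such a unit in $F_{20}$. I would combine this with the indecomposables of Proposition~\ref{pr:KKKcyclo}\ref{pr:KKKcyclo-indec} and their traces from \ref{pr:KKKcyclo-trace} to contradict the structure forced by a classical \UTL.
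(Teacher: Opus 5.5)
\textbf{There is a genuine gap: the $F_{15}$ case fails.} Your reduction to $F_{15}$ and $F_{20}$ is correct, using $F_{15}=F_{30}$ and $F_{20}\subset F_{60}$. For $F_{20}$ your plan also works: $\vartheta(\vartheta-1)$ with $\vartheta=\zeta_{20}+\zeta_{20}^{-1}$ is totally positive of norm $5$, which is the element the paper uses (via \cite[Cor.~3.13]{KKK}). The problem is $F_{15}$, where the element you intend to search for does not exist.

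In $F_{15}$ the prime $2$ is inert, $(5)=\mathfrak{p}_5^4$ with $\NN(\mathfrak{p}_5)=5$, $(3)=\mathfrak{p}_3^2$ with $\NN(\mathfrak{p}_3)=9$, and every other prime ideal has norm at least $16$. So a totally positive $\beta$ with $\NN(\beta)<16$ not a power of $2$ would have to generate $\mathfrak{p}_5$ or $\mathfrak{p}_3$. Put $\theta=\zeta_{15}+\zeta_{15}^{-1}$, with minimal polynomial $x^4-x^3-4x^2+4x+1$. These ideals are generated by $\theta+1$ and $\theta^3-2\theta+1=\theta-(\zeta_{15}^6+\zeta_{15}^{-6})$, whose norms are $-5$ and $-9$.

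Every unit of $F_{15}$ has norm $+1$. The signs of $-1$, $\theta$ and $\theta^2-2$ already generate all eight sign vectors with an even number of minus signs. The signature map on $\UK[F_{15}]/\UKctv[F_{15}]\cong(\Z/2)^4$ cannot be surjective, since $F_{15}=F_{30}$ contains the nonsquare totally positive unit $\alpha_{30}$ (Lemma~\ref{le:nonsquare}\ref{it:nonsquare-a}). Hence no generator of $\mathfrak{p}_5$ or $\mathfrak{p}_3$ is totally positive. So \cite[Thm.~3.1(h)]{KKK} cannot be applied this way, not even for $K=F_{15}$ itself.

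\textbf{The fallback does not repair this.} It names $\alpha_{20}$, which lives in the field you have already handled. Moreover, the indecomposability in Proposition~\ref{pr:KKKcyclo}\ref{pr:KKKcyclo-indec} holds only in $F_k$, not in an arbitrary extension $K$, so it says nothing about $K$.

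\textbf{What is missing.} Use \cite[Thm.~3.1(e)]{KKK} with the unit $\ve=\alpha_{30}\in F_{15}$: if both $\ve$ and $2\ve$ are nonsquares in $K$, you are done. Because squareness can change in an extension, you must then treat the two remaining cases separately.
\begin{enumerate}
\item If $\sqrt{\ve}\in K$, then $K\supset\qq{\alpha_{30}}=F_{60}\supset F_{20}$ by Lemma~\ref{le:whensquare}\ref{it:sqrt}, which is already covered.
\item If $\sqrt{2\ve}\in K$, then $K\supset\qq{2\alpha_{30}}$. This octic field, with minimal polynomial $x^8-16x^6+56x^4-56x^2+16$, contains $\sqrt6$, and no extension of $\qq{6}$ admits a classical \UTL{} by \cite[Cor.~3.13]{KKK}.
\end{enumerate}
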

	\begin{proof}
		If $m \mid k$, then $F_m \subset F_k$ by Lemma \ref{le:inclusions}. Therefore, if $K$ has $\UKPlus \neq \UKctv$ and admits a classical \UTL, then in light of Lemma \ref{le:eps}, $K$ can only contain $F_k$ if $k=2^a3^b5^c$ where $0\leq a \leq 2$ and $0 \leq b,c \leq 1$. If $k=1,2,3,4,6$ then $F_k = \Q$, for $k=5,10$ we have $F_k=\qq{5}$ and $k=12$ gives $F_k=\qq{3}$. These are the three exceptions listed in the statement of Theorem \ref{th:general}. Thus, to conclude the proof, it remains to handle $K \supset F_k$ for $k = 15, 20, 30$ and $60$. 
		
		The field $F_{20}$ is quartic, $\abs{\UKPlus[F_{20}]/\UKctv[F_{20}]}=2$, and it contains a totally positive element of norm $5<2^4$. (One such element is $\vartheta(\vartheta-1)$ where $\vartheta=\zeta_{20}+\zeta_{20}^{-1}$.) Thus, by \cite[Cor.~3.13]{KKK}, no extension of $F_{20}$ admits a classical \UTL. Note that this also covers extensions of $F_{60}$. This leaves us with $F_{15}=F_{30}$.
		
		Let $K$ contain $F_{30}$. By Lemma \ref{le:nonsquare}\ref{it:nonsquare-a}, we know that $F_{30}$ contains a nonsquare totally positive unit $\ve=\alpha_{30}$. If both $\ve$ and $2\ve$ are nonsquare in $K$, then $K$ admits no classical \UTL{} by \cite[Thm.~3.1(e)]{KKK} if $\abs{\UKPlus/\UKctv}=2$, or trivially by Proposition \ref{pr:KKK2.5} if $\abs{\UKPlus/\UKctv} > 2$. This means that $K$ contains either $\qq{\alpha_{30}}$ or $\qq{2\alpha_{30}}$.
		
		Lemma \ref{le:nonsquare}\ref{it:sqrt} yields $\qq{\alpha_{30}}=F_{60}$ for the former field, and fields containing $F_{60}$ were already handled. We claim that the latter field, $\qq{2\alpha_{30}}$, contains $\sqrt6$: One can readily check that $\alpha_{30}$ is a root of $x^4-8x^3+14x^2-7x+1$, so $\sqrt{2\alpha_{30}}$ is a root of $x^8-2\cdot 8x^6+4\cdot 14x^4-8\cdot 7x^2+16=x^8-16x^6+56x^4-56x^2+16$; and since this is an irreducible polynomial, it is its minimal polynomial. We can look this field up in the LMFDB database \cite{LMFDB}; it has the label 8.8.5184000000.2 and we indeed see that it contains not only $F_{15}$, but also $\qq{6}$. This concludes the proof, since no extension of $\qq{6}$ admits a classical \UTL{} by \cite[Cor.~3.13]{KKK}.
	\end{proof}
	
	The preceding theorem easily yields a large part of our main Theorem \ref{th:main} (see the corollary below), but it is also interesting on its own -- it shows that if a field admits a classical \UTL, then it cannot contain a nonsquare totally positive unit together with a large real cyclotomic subfield. It is another step (after \cite[Thm.~3.1]{KKK}) towards proving Kitaoka's conjecture in the case $\UKPlus \neq \UKctv$.
	
	\begin{corollary}\label{co:eps}
		If $F_k$ admits a classical \UTL, then either $\UKPlus[F_k] = \UKctv[F_k]$ or $F_k=\qq{3}$.
	\end{corollary}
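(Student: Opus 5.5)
This follows by applying Theorem~\ref{th:general} with $K=F_k$ itself. Suppose $F_k$ admits a classical \UTL{} and $\UKPlus[F_k]\neq\UKctv[F_k]$. Theorem~\ref{th:general} is applied with $K=F_k$ and with the subfield $F_k\subset K$ being $F_k$ itself. Its hypotheses hold except possibly for the requirement that this subfield differ from $\Q$, $\qq{3}$ and $\qq{5}$. Since $F_k$ does admit a classical \UTL{}, the theorem forces $F_k\in\{\Q,\qq{3},\qq{5}\}$.

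It remains to rule out $\Q$ and $\qq{5}$, where I will simply check that positive units are squares.
\begin{itemize}
\item For $F_k=\Q$ we have $\UKPlus[\Q]=\{1\}=\UKctv[\Q]$, which contradicts the assumption.
\item For $\qq{5}$ the fundamental unit $\frac{1+\sqrt5}{2}$ has norm $-1$. Hence every totally positive unit $\pm\varepsilon^m$ must have $m$ even and sign $+$, so it is a square. This again gives $\UKPlus=\UKctv$, a contradiction.
\end{itemize}
The only remaining possibility is $F_k=\qq{3}$, which is exactly the second alternative of the corollary.

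There is no real obstacle here. The only point to double-check is that Theorem~\ref{th:general} may be applied with the subfield equal to $K$ itself, which is allowed since its hypothesis is merely the inclusion $F_k\subset K$. Beyond that, the argument needs only the standard fact that the fundamental unit of $\qq{5}$ has norm $-1$. One could alternatively avoid the unit computation for $\qq{5}$ by recalling $F_5$ is $F_{p^n}$ with $p=5$, $n=1$, consistent with Lemma~\ref{le:nonsquare}\ref{it:nonsquare-b}, but the direct check is simplest.
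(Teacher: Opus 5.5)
Your proof is correct and follows the paper's argument: apply Theorem~\ref{th:general} with $K=F_k$, then discard $\Q$ and $\qq{5}$. The paper discards $\Q$ by noting that it admits no universal ternary lattice rather than via $\U_{\Q}^+=\{1\}=\U_{\Q}^2$, but either works.

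Your closing aside is wrong, though. Lemma~\ref{le:nonsquare}\ref{it:nonsquare-b} only says that $\UKPlus=\UKctv$ forces $F=F_{p^n}$, not the converse (the paper notes $F_{29}$ as a counterexample), so $F_5=F_{p^n}$ proves nothing. The norm $-1$ check for $\qq{5}$ is what actually does the work.
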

	\begin{proof}
		This is Theorem \ref{th:general} applied with $K=F_k$; one just needs to note that $\Q$ fails to admit a \UTL{} and that $\UKPlus[\qq{5}] = \UKctv[\qq{5}]$.
	\end{proof}
	
	We are now ready to prove the main theorem. It is interesting to note that the proof considers three separate types of $F_k$ based on the value of $k$, and in each of these types, there is exactly one exceptional field that does admit a classical \UTF.
	
	\begin{proof}[Proof of Theorem \ref{th:main}]
		First of all, the fields $\qq{2}$, $\qq{3}$ and $\qq{5}$ indeed admit a classical \UTF{} by \cite{CKR}. The main task is to prove the other implication.  
		
		Let $F_k = \Q(\zeta_k)^+$. We will distinguish several cases. First, if $k$ is a power of $2$, then $F_k$ admits a classical \UTF{} if and only if $F_k=\qq{2}$ by \cite[Thm.~1.4]{KY-EvenBetter}.
		
		If $k$ is not a power of $2$ and $4 \mid k$, then by Lemma \ref{le:nonsquare}\ref{it:nonsquare-c}, $F_k$ contains a nonsquare totally positive unit. Thus, by Corollary \ref{co:eps}, it can only admit a classical \UTL{} if $F_k=\qq{3}$.
		
		Finally, if $4 \nmid k$, then we can assume that $k$ is odd, since $F_k = F_{2k}$ for odd $k$ by Lemma \ref{le:inclusions}. As $2$ is not a divisor of $k$, the prime $2$ is unramified in $F_k$ by Lemma \ref{le:ramification}. Thus $F_k$ admits no classical \UTF{} unless $F_k=\qq{5}$ by \cite[Thm.~1.1]{KKK}.
	\end{proof}
	
	We conclude this section by a strong corollary of Theorem \ref{th:general}.
	
	\begin{corollary}
		Let $K$ be a totally real number field. If $K$ contains a real cyclotomic field which is neither $F_{12}=\qq{3}$, nor of the form $F_{p^n}$ for a prime $p$, then $K$ does not admit a classical universal ternary lattice.
	\end{corollary}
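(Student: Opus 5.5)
The plan is to reduce to Theorem~\ref{th:general}. That theorem needs two things: that $K$ contains some $F_k$ other than $\Q$, $\qq{3}$, $\qq{5}$, and that $\UKPlus\neq\UKctv$. The first holds by hypothesis: $\Q=F_2$ and $\qq{5}=F_5$ are both of the excluded form $F_{p^n}$, and $\qq{3}$ is excluded explicitly. So the only real work is to produce a nonsquare totally positive unit in $K$ itself. The units $\alpha_m$ of Lemma~\ref{le:nonsquare} do this in the field $F_k$, but they could become squares in the larger field $K$.

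First normalise $k$. If $k$ is odd, replace it by $2k$; this does not change $F_k$ by Lemma~\ref{le:inclusions}\ref{it:2k=k}. Now $k$ is even, and we claim $k>2$ and $k$ is not of the form $2p^n$. Indeed, otherwise $F_k$ would be one of $F_1$, $F_{2^{n+1}}$, or $F_{2p^n}=F_{p^n}$ for odd $p$, each of which is of the excluded form. Equivalently, $m:=k/2$ is neither $1$ nor a prime power, so $m$ has at least two distinct prime divisors. This property is preserved under multiplication by powers of $2$. Hence for every $j\ge 0$ the number $k_j:=2^jk$ is even and not of the form $2p^n$. By Lemma~\ref{le:nonsquare}\ref{it:nonsquare-a}, $\alpha_{k_j}$ is then a totally positive unit in $F_{k_j}$ which is not a square there.

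Now iterate. Suppose $F_{k_j}\subset K$. If $\alpha_{k_j}$ is not a square in $K$, then $\UKPlus\neq\UKctv$. Since $F_k\subset K$, Theorem~\ref{th:general} applies and we are done. Otherwise $\sqrt{\alpha_{k_j}}\in K$, and Lemma~\ref{le:whensquare}\ref{it:sqrt} gives $F_{k_{j+1}}=F_{2k_j}=\qq{\alpha_{k_j}}\subset K$. The step can then be repeated. Starting from $F_{k_0}=F_k\subset K$, this process must stop at some finite $j$: the degrees $[F_{k_j}:\Q]=\varphi(2^jk)/2$ are unbounded in $j$, while all these fields lie in $K$ of fixed finite degree.

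The only delicate point, which I regard as the main obstacle, is the bookkeeping in the normalisation step. One has to check that passing from odd $k$ to $2k$, and then to $2^jk$, never lands on a field of the form $F_{2p^n}$. Otherwise Lemma~\ref{le:nonsquare}\ref{it:nonsquare-a} would not supply a nonsquare unit at that step. This reduces to the observation above that $k/2$ has at least two distinct prime factors, and that multiplying by $2$ cannot destroy this.
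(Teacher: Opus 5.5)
Your proof is correct, but it obtains $\UKPlus\neq\UKctv$ by a different route than the paper.

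The paper takes the nonsquare totally positive unit of $F$ supplied by Lemma~\ref{le:nonsquare}\ref{it:nonsquare-b}. It then invokes the general fact \cite[Rem.~1]{DDK} that $\abs{\UKPlus[{}]/\UKctv[{}]}$ cannot decrease under extensions of totally real fields, so $K$ inherits such a unit. After that, both arguments check $F\notin\{\Q,\qq{3},\qq{5}\}$ and apply Theorem~\ref{th:general}.

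You avoid that external result altogether:
\begin{itemize}
\item Your normalisation is essentially the proof of Lemma~\ref{le:nonsquare}\ref{it:nonsquare-b}, sharpened to the statement that $k/2$, and hence every $2^{j}k/2$, has two distinct prime factors. This makes each $\alpha_{2^jk}$ a nonsquare totally positive unit of $F_{2^jk}$.
\item Lemma~\ref{le:whensquare}\ref{it:sqrt} converts ``$\alpha_{2^jk}$ is a square in $K$'' into $F_{2^{j+1}k}\subset K$.
\item Finiteness of $[K:\Q]$ then forces some $\alpha_{2^jk}$ to stay a nonsquare in $K$.
\end{itemize}

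Your version is self-contained and uses only lemmas proved in the paper. It even exhibits an explicit nonsquare totally positive unit of $K$, by the same tower trick the paper itself uses for $F_{30}$ in the proof of Theorem~\ref{th:general}. The paper's version is shorter and rests on a monotonicity statement valid for arbitrary totally real extensions, not only those tracked along the cyclotomic tower $F_k\subset F_{2k}\subset F_{4k}\subset\cdots$.
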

	\begin{proof}
		Let the real cyclotomic field be $F$. Lemma \ref{le:nonsquare}\ref{it:nonsquare-b} yields $\UKPlus[F] \neq \UKctv[F]$. This proves $\UKPlus[K] \neq \UKctv[K]$, since the quantity $\abs{\UKPlus[{}]/\UKctv[{}]}$ cannot decrease under field extension of totally real number fields (see \cite[Rem.~1]{DDK}). Further, the assumptions imply $F\neq F_3=\Q$ and $F \neq F_5 = \qq{5}$. Thus $K$ satisfies all assumptions of Theorem \ref{th:general}, and hence admits no classical \UTL{}.
	\end{proof}
	
	Let us note that Theorem \ref{th:main} does not hold for non-classical forms: Indeed, the quartic field $F_{20}$ 
	does admit a non-classical universal ternary form $x^2 + y^2 + \ve z^2 + xy + \pi yz$, where we write $\vartheta=\zeta_{20}+\zeta_{20}^{-1}$, $\ve=\alpha_{20}=\vartheta+2$ and $\pi=\vartheta^3+\vartheta^2-3\vartheta-2$. Its universality was proven and shall be published in an upcoming paper by the present author and R. Scharlau.

\end{document}